\newif\ifarxiv
\newif\ifapt
\numberwithin{equation}{section}
\numberwithin{equation}{section}
\theoremstyle{plain}
\newtheorem{theorem}{Theorem}[section]
\newtheorem{lemma}[theorem]{Lemma}
\newtheorem{corollary}[theorem]{Corollary}
\newtheorem{proposition}[theorem]{Proposition}
\theoremstyle{definition}
\newtheorem{remark}[theorem]{Remark}
\newcommand{\R}{\mathbb{R}}
\newcommand{\cA}{\mathcal{A}}
\newcommand{\cF}{\mathcal{F}}
\newcommand{\cG}{\mathcal{G}}
\newcommand{\cR}{\mathcal{R}}
\newcommand{\E}{\mathbb{E}}
\renewcommand{\E}{\mathbb{E}}
\newcommand{\pr}{\mathbb{P}}
\newcommand{\Var}{\operatorname{Var}}
\newcommand{\prto}{\xrightarrow{\rm p}}
\newcommand{\wprto}{\ \xrightarrow{\rm p} \ }
\newcommand{\whp}{\text{w.h.p.}}
\newcommand{\eqd}{\stackrel{\rm d}{=}}
\newcommand{\weq}{\ = \ }
\newcommand{\wle}{\ \le \ }
\newcommand{\wge}{\ \ge \ }
\newcommand{\weqd}{\ \eqd \ }
\newcommand{\abs}[1]{\lvert #1 \rvert}
\newcommand{\Erdos}{Erd\H{o}s\xspace}
\newcommand{\Leskela}{Leskel{\"a}\xspace}
\newcommand{\flood}{\operatorname{flood}}
\newcommand{\cred}[1]{#1}
\begin{document}

\title{First passage percolation on sparse random graphs with boundary weights}

\ifarxiv
\author{Lasse \Leskela \and Hoa Ngo}
\maketitle
\fi
\ifapt
\authornames{LASSE LESKEL\"A AND HOA NGO}
\shorttitle{First passage percolation with boundary weights}
\authorone[Aalto University]{Lasse Leskel\"a}
\emailone{lasse.leskela@aalto.fi, hoa.ngo@aalto.fi}
\authorone[Aalto University]{Hoa Ngo} 
\addressone{Department of Mathematics and Systems Analysis, Aalto University, Otakaari 1, 02015 Espoo, Finland}
\fi

\begin{abstract}
A large and sparse random graph with independent exponentially distributed link weights can be used to model the propagation of messages or diseases in a network with an unknown connectivity structure. In this article we study an extended setting where also the nodes of the graph are equipped with nonnegative random weights which are used to model the effect of boundary delays across paths in the network. Our main results provide approximative formulas for typical first passage times, typical flooding times, and maximum flooding times in the extended setting, over a time scale logarithmic with respect to the network size.
\end{abstract}

\ifarxiv
\noindent
{\bf Keywords}: sparse random graph, percolation, flooding, broadcasting, rumor spreading, SI epidemic model, configuration model, incubation time \\[2ex]
\noindent
{\bf AMS subject classification}: 60K35; 91D30 \\[2ex]
\fi
\ifapt
\keywords{sparse random graph; percolation; flooding; broadcasting; rumor spreading; SI epidemic model; configuration model; incubation time}
\ams{60K35}{91D30}
\fi

\section{Introduction}

Classical first passage percolation theory, initiated about a half century ago in \cite{Hammersley_Welsh_1965}, studies a connected undirected graph $G$ where each adjacent node pair $e$ is attached a weight $W(e) > 0$. When the weights are independent and identically distributed random variables, then
\[
 W_G(u,v)
 \weq \inf_{\Gamma} \, \sum_{e \in \Gamma} W(e),
\]
where the infimum is taken over all paths $\Gamma$ in graph $G$ from $u$ to $v$, defines a natural random metric which has been intensively studied in a wide variety of settings, especially integer lattices \cite{Auffinger_Damron_Hanson_2015}. The quantity $W_G(u,v)$ may be interpreted as the \emph{first passage time} from $u$ to $v$, when the link weights are considered as transmission times. A relevant quantity of interest in modern social and information networks is the \emph{flooding time} $\max_v W_G(u, v)$, which corresponds to the time it takes for a message or disease to spread from a single root node $u$ to all other nodes along the paths of the graph. Alternatively, the link weights can be viewed as economic costs, congestion delays, or carrying capabilities that can be encountered in various real networks \cite{Newman_2010,VanMieghem_2014}.
 
In this paper we study a generalized version of the above setting where in addition to link weights, each node is assigned two weights $X_0(v) \ge 0$ and $X_1(v) \ge 0$, and we define
\[
 W(u,v)
 \weq X_0(u) + W_G(u,v) + X_1(v).
\]
When the weights are considered as transmission times, $W(u,v)$ can be interpreted as the first passage time from $u$ to $v$ in a setting where $X_0(u)$ represents the entry delay and $X_1(v)$ the exit delay along a path from $u$ to $v$ in a network modeled by the graph $G$. The above formulation can also corresponds a generalization of the SI epidemic model \cite{Andersson_Britton_2000} with incubation times by setting $X_0(v) = 0$ and letting $X_1(v)$ represent the length of the time period during which an infected individual $v$ spreads a disease while displaying no symptoms of illness. In this case $W_G(u,v)$ represents the time until node $v$ becomes infected, and $W(u,v)$ the time until node $v$ becomes acutely ill in a population where initially node $u$ is ill and all other nodes are susceptible.

The main result\cred{s} of the paper \cred{are} approximative formulas for $W(u,v)$, $\max_v W(u,v)$, and $\max_{u,v} W(u,v)$ in a large and sparse random graph $G$, when the link weights $(W(e))_{e \in E(G)}$ and the node weights $(X_0(v), X_1(v))_{v \in V(G)}$ are mutually independent collections of independent random numbers, such that $W(e)$ is exponentially distributed with rate parameter $\lambda > 0$, and the distribution of $X_i(v)$ has an exponential tail with rate parameter $\lambda_i \in (0,\infty]$ in the sense that
\begin{equation}
 \label{eq:ExpTail}
 \lim_{t \to \infty} \frac{-\log \pr(X_i(v) > t)}{t} 
 \weq \lambda_i,
 \qquad i=0,1.
\end{equation}
The case $\lambda_i = \infty$ includes distributions with bounded support, for example the uniform distribution on $[0,1]$, and the degenerate case with $X_i(v)=0$ almost surely.  No restrictions about the joint distribution of $X_0(v)$ and $X_1(v)$ are required for the main results.

\paragraph{Notations.} A large network is modeled as a sequence of graphs indexed by a scale parameter $n=1,2,\dots$ Hence most scalars, probability distributions, and random variables depend on $n$, but this dependence is often omitted for clarity. Especially, we write $\pr$ instead of $\pr_n$ for the probability measure characterizing events related the model with scale parameter $n$. An event depending on $n$ is said to occur \emph{with high probability} if its probability tends to one as $n \to \infty$. The symbol $\prto$ refers to convergence in probability. We write $f(n) = o(g(n))$ if $\lim_{n \to \infty} f(n)/g(n) = 0$, and $f(n) = O(g(n))$ if $\limsup_{n \to \infty} f(n)/g(n) < \infty$. We write $X \eqd Y$ when random variables $X$ and $Y$ have the same distribution. \cred{The positive part of a number $x$ is denoted $(x)_{+} = \max\{x,0\}$.}

\section{Main results}
\label{sec:MainResults}
Given a list of nonnegative integers $d = (d_1, \dots, d_n)$, let $G = G(n,d)$ be a random graph, which is uniformly distributed in the set $\cG(n,d)$ of all undirected graphs on node set $[n] = \{1,\dots,n\}$ such that node $v$ has degree $d_v$ for all $v$. We assume that the degree list $d$ satisfies the \Erdos--Gallai condition \cite[Theorem C.7]{Marshall_Olkin_Arnold_2011}, so that $\cG(n,d)$ is nonempty. A stochastic model for a sparse large graph is obtained by considering a sequence of random graphs $G = \cG(n,d^{(n)})$ with degree lists $d^{(n)} = (d_1^{(n)}, \dots, d^{(n)}_n)$ indexed by $n=1,2,\dots$ such that the empirical degree distribution
\[
 f_n(k)
 \weq \frac{1}{n} \sum_{v=1}^n 1(d^{(n)}_v = k)
\]
converges to a limiting probability distribution $f$ with a nonzero finite mean $\mu = \sum_k k f(k)$ according to
\begin{equation}
 \label{eq:DegreeLim}
 f_n(k) \ \to \ f(k)
 \quad \text{for all $k \ge 0$}.
\end{equation}
Throughout we will also assume that for all $n$,
\begin{equation}
 \label{eq:DegreeVariance}
 \sum_k k^{2+\epsilon} f_n(k)
 \wle c
\end{equation}
and
\begin{equation}
 \label{eq:DegreeMinMax}
 \min_v d^{(n)}_v
 \wge \delta
\end{equation}
for some constants $c, \epsilon > 0$ and $\delta \ge 3$ such that $f(\delta) > 0$. Condition~\eqref{eq:DegreeVariance} implies that the family of probability measures $(f_n)_{n \ge 1}$ is relatively compact in the 2-Wasserstein topology \cite{Leskela_Vihola_2013} and guarantees that the mean and the variance of the empirical degree distribution converge to finite values which are equal to the mean and variance of the limiting distribution. Condition~\eqref{eq:DegreeMinMax} in turn implies that $G$ is connected with high probability \cite{Amini_Draief_Lelarge_2013,VanDerHofstad_2018}. 

The following theorem summarizes the main results of the paper. Here $u^*$ and $v^*$ represent uniformly and independently randomly chosen nodes, corresponding to typical values of the quantities of interest.

\begin{theorem}
\label{the:DelayedFloodingTime}
Let $G = G(n,d^{(n)})$ be a random graph satisfying the regularity conditions \eqref{eq:DegreeLim}--\eqref{eq:DegreeMinMax}. Then for independent and uniformly random nodes $u^*$ and $v^*$,
\begin{align}
 \label{eq:DelayedPassageTypical}
 \frac{W(u^*,v^*)}{\log n}
 &\wprto \frac{1}{\lambda(\nu-1)}, \\
 \label{eq:DelayedFloodingTypical}
 \frac{\max_v W(u^*,v)}{\log n}
 &\wprto \frac{1}{\lambda(\nu-1)} + \frac{1}{\lambda \delta \wedge \lambda_1}, \\
 \label{eq:DelayedFloodingMax}
 \frac{\max_{u,v} W(u,v)}{\log n}
 &\wprto \frac{1}{\lambda \delta \wedge \lambda_0} + \frac{1}{\lambda(\nu-1)} + \frac{1}{\lambda \delta \wedge \lambda_1}, 
\end{align}
where $\nu = \sum_k k(k-1) f(k) / \sum_k k f(k)$.
\end{theorem}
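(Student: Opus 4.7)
The plan is to combine the classical continuous-time branching-process (CTBP) coupling for first passage percolation on the configuration model with an extreme-value analysis of the boundary weights, exploiting that on the $\log n$ scale the bulk graph distance, the two ``last-mile'' exponential delays at the endpoints, and the two node weights all contribute additively.

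For \eqref{eq:DelayedPassageTypical}, the boundary weights $X_0(u^*)$ and $X_1(v^*)$ are each a single evaluation of an exponentially tailed distribution at a random node and hence $O_P(1)$, so $W(u^*,v^*)/\log n$ has the same in-probability limit as $W_G(u^*,v^*)/\log n$. The latter converges to $1/(\lambda(\nu-1))$ under the sparsity conditions \eqref{eq:DegreeLim}--\eqref{eq:DegreeVariance} by the now-standard CTBP coupling of the two explorations emanating from $u^*$ and $v^*$.

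For \eqref{eq:DelayedFloodingTypical}, I would factor $\max_v W(u^*,v) = X_0(u^*) + \max_v(W_G(u^*,v)+X_1(v))$ and discard the $O_P(1)$ prefix. Introduce the last-mile delay $L(v) = \min_{w \sim v} W(\{v,w\})$, which is $\operatorname{Exp}(\lambda d_v)$ conditional on the degree and independent of $X_1(v)$. For the upper bound, a neighborhood-saturation estimate for the flood on sparse configuration models gives that w.h.p.\ every node has at least one neighbor reached by time $(1+o(1))\log n/(\lambda(\nu-1))$, whence $W_G(u^*,v) \le (1+o(1))\log n/(\lambda(\nu-1)) + L(v)$ uniformly in $v$. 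Since $L(v)+X_1(v)$ has exponential tail of rate $\lambda d_v \wedge \lambda_1$ by convolution, its maximum over $v$ is attained at minimum-degree vertices and is at most $(1+o(1)) \log n/(\lambda \delta \wedge \lambda_1)$ w.h.p. For the matching lower bound, select a large collection $\cR$ of minimum-degree nodes pairwise at graph distance greater than some $r = O(\log \log n)$, of size $n^{1-o(1)}$, so that the pairs $(L(v), X_1(v))_{v \in \cR}$ are asymptotically jointly independent; a maximum-of-i.i.d.\ estimate then yields $\max_{v \in \cR}(L(v)+X_1(v)) \ge (1-o(1))\log n/(\lambda\delta \wedge \lambda_1)$, while the bulk distance from $u^*$ to the neighborhood of each $v \in \cR$ concentrates around $\log n/(\lambda(\nu-1))$.

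Claim \eqref{eq:DelayedFloodingMax} follows by applying the same recipe symmetrically at both endpoints: a well-separated minimum-degree vertex $u^\dagger$ produces the entry contribution $(1+o(1))\log n/(\lambda\delta \wedge \lambda_0)$, an independent analogous $v^\dagger$ produces the exit contribution $(1+o(1))\log n/(\lambda\delta \wedge \lambda_1)$, and the interior distance between them contributes the bulk term $\log n/(\lambda(\nu-1))$. The main obstacle in each lower bound is the decoupling step: the last-mile delays and the bulk distances associated with different target nodes are measurable functions of the common link-weighted random graph and are not genuinely independent. I expect to handle this by restricting attention to a well-separated subset of minimum-degree vertices, using the locally tree-like structure of the configuration model to establish asymptotic independence, and closing the argument with a second-moment computation for the relevant extremal statistic.
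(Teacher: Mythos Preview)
Your outline is in the right spirit but diverges from the paper's argument in two places, and one of them contains a real gap.

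\textbf{Upper bound.} Your implication ``every node has at least one neighbor reached by time $(1+o(1))\log n/(\lambda(\nu-1))$, whence $W_G(u^*,v)\le(1+o(1))\log n/(\lambda(\nu-1))+L(v)$'' is not valid as stated. If $w$ is the neighbor of $v$ that is reached early, you only get $W_G(u^*,v)\le(1+o(1))\log n/(\lambda(\nu-1))+W(\{v,w\})$, and there is no reason for $w$ to be the neighbor realizing $L(v)=\min_{w'\sim v}W(\{v,w'\})$. To use $L(v)$ you would need \emph{all} neighbors of $v$ to be reached early, which is essentially the flooding-time statement you are trying to prove. The paper avoids this by working with the two-sided exploration bound $W_G(u,v)\le T_u(\beta_n)+T_v(\beta_n)$ and then controlling $\max_v\bigl(T_v(\alpha_n)+X_1(v)\bigr)$ directly via a moment-generating-function estimate (their Proposition~4.2); here $T_v(\alpha_n)$ plays the role of your $L(v)$ but is defined from the exploration started at $v$, so no such coupling issue arises.

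\textbf{Lower bound.} Your plan is to manufacture asymptotic independence of the pairs $(L(v),X_1(v))$ over a well-separated set $\cR$ of minimum-degree vertices, and then combine the resulting extreme value with the bulk distance. This can be made to work, but it is substantially harder than what the paper does, and you correctly flag the decoupling of the $B_n$-part (bulk distances to the vertices in $\cR$) as the sticking point. The paper sidesteps this entirely: since the node weights $X_1(v)$ are already genuinely i.i.d.\ and independent of the weighted graph, a short averaging lemma (their Lemma~4.4) shows that if $|I_n|\,\pr(A_n>a_n)\to\infty$ for i.i.d.\ $A_n$, and $B_n(i^*)>b_n$ w.h.p.\ for a uniformly random index, then $\max_i(A_n(i)+B_n(i))>a_n+b_n$ w.h.p. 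Applying this with $A_n(v)=X_1(v)$ and $B_n(v)=W_G(u^*,v)$ yields the contribution $1/\lambda_1$, while the contribution $1/(\lambda\delta)$ is read off directly from the known undelayed result \eqref{eq:UndelayedFloodingTypical}; taking the larger of the two gives $1/(\lambda\delta\wedge\lambda_1)$. No well-separated sets, no local-tree-like approximation, and no asymptotic-independence argument for graph-dependent quantities are needed. The same device, iterated, handles \eqref{eq:DelayedFloodingMax}.
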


\section{Discussion and applications}

\subsection{Earlier work}
The results of Section~\ref{sec:MainResults} are structurally similar to the main result in \cite{Janson_1999} which states that for the complete graph $K=K_n$ on $n$ nodes, the weighted distances (without boundary weights) satisfy
\begin{align}
 \label{eq:UndelayedPassageTypicalK}
 \frac{ W_K(u^*,v^*)}{\log n / n}
 &\wprto \frac{1}{\lambda}, \\
 \label{eq:UndelayedFloodingTypicalK}
 \frac{\max_v W_K(u^*,v)}{\log n / n}
 &\wprto \frac{2}{\lambda}, \\
 \label{eq:UndelayedFloodingMaxK}
 \frac{\max_{u,v} W_K(u,v)}{\log n / n}
 &\wprto \frac{3}{\lambda}.
\end{align}
The above results have more recently been extended to sparse random graphs. For a random graph $G = G(n,d^{(n)})$ satisfying the regularity conditions \eqref{eq:DegreeLim}--\eqref{eq:DegreeMinMax}, the weighted distances (without boundary weights) satisfy
\begin{align}
 \label{eq:UndelayedPassageTypical}
 \frac{W_G(u^*,v^*)}{\log n}
 &\wprto \frac{1}{\lambda(\nu-1)}, \\
 \label{eq:UndelayedFloodingTypical}
 \frac{\max_v W_G(u^*,v)}{\log n}
 &\wprto \frac{1}{\lambda(\nu-1)} + \frac{1}{\lambda \delta}, \\
 \label{eq:UndelayedFloodingMax}
 \frac{\max_{u,v} W_G(u,v)}{\log n}
 &\wprto \frac{1}{\lambda(\nu-1)} + \frac{2}{\lambda \delta }.
\end{align}
Formulas \eqref{eq:UndelayedPassageTypical}--\eqref{eq:UndelayedFloodingMax} agree with \eqref{eq:UndelayedPassageTypicalK}--\eqref{eq:UndelayedFloodingMaxK} because $\nu \approx n$ and $\delta \approx n$ for the complete graph on $n$ nodes. Formula \eqref{eq:UndelayedPassageTypical} was proved in \cite{Ding_Kim_Lubetzky_Peres_2010} for degenerate degree distributions (random regular graph),  in~\cite{Bhamidi_VanDerHofstad_Hooghiemstra_2010} for power-law degree distributions (when $\tau\in (2,3)$), and in \cite{Amini_Draief_Lelarge_2013} for general limiting degree distributions with a finite variance. Formulas \eqref{eq:UndelayedFloodingTypical}--\eqref{eq:UndelayedFloodingMax} have been proved in \cite{Ding_Kim_Lubetzky_Peres_2010} for random regular graphs and in \cite{Amini_Draief_Lelarge_2013,Amini_Lelarge_2015} for general limiting degree distributions with a finite variance. Sparse random graphs where the limiting degree distribution has infinite variance have in general a completely different behavior with typical passage times of order $o(\log n)$ \cite{Baroni_VanDerHofstad_Komjathy_2017,Bhamidi_VanDerHofstad_Hooghiemstra_2010} and they are not discussed further in this paper. The constant $\nu$ appearing in the above formulas can be recognized as the mean of the downshifted size biasing \cite{Leskela_Ngo_2017} of the limiting degree distribution $f$, and $\nu$ is finite if and only if the second moment of $f$ is finite.

Theorem~\ref{the:DelayedFloodingTime} generalizes formulas \eqref{eq:UndelayedPassageTypical}--\eqref{eq:UndelayedFloodingMax} to the setting where nodes have nonnegative random weights $X_0(v)$ and $X_1(v)$ with exponential tail. The main qualitative findings are that the boundary weights have no effect on the typical passage time $W(u^*, v^*)$, but they may affect the typical flooding time $\max_v W(u^*,v)$ and the maximum flooding time $\max_{u,v} W(u,v)$. \cred{All boundary weight effects can be ignored on the $\log n$ time scale when the tails of the node weight distributions decay sufficiently fast $(\lambda_0,\lambda_1 > \lambda\delta)$.}

A notable feature of the results in Theorem~\ref{the:DelayedFloodingTime} is that the leading role of the node weight distributions is the behavior of $\pr(X_i(v) > t)$ as $t \to \infty$, whereas the leading role of link weight distribution is in many cases \cite{Baroni_VanDerHofstad_Komjathy_2017,Janson_1999} governed by the behavior of $\pr( W(e) > t )$ as $t \to 0$.

\cred{
\begin{remark}
The distribution of the node weight $X_i(v)$ is heavy-tailed if the limit in \eqref{eq:ExpTail} is zero. For heavy-tailed node weight distributions, it is easy to check that $\max_{u\in V}X_i(u)$ grows to infinity faster than logarithmically. Hence Theorem \ref{the:DelayedFloodingTime} remains formally valid also when $\lambda_0 = 0$ or $\lambda_1 = 0$, using the convention that $\frac{1}{0} = \infty$.
\end{remark}
}

\subsection{Application: Broadcasting on random regular graphs}

As an application, we discuss a continuous-time version of a message transmission and replication model operating in a \emph{push} mode \cite{Aalto_Leskela_2015,Amini_Draief_Lelarge_2013,Pittel_1987}. Let $G$ be a random $\delta$-regular graph on $n$ nodes, where each node has a state in $\{0,1,2\}$. 
Initially one of the nodes called \emph{root} is in state 1, and all other nodes are in state 0.  Each node activates at random time instants according to a Poisson process of rate $\kappa > 0$, independently of other nodes and the underlying graph structure. When a node activates, it contacts a random target among its neighbors. The states of the nodes are updated in two ways:
\begin{itemize}
\item $0 \mapsto 1$: If the initiator of a contact is in state 1 or 2, and the target node is in state 0, then the state of the target node changes from 0 to 1; otherwise nothing happens during the contact.
\item $1 \mapsto 2$: Having entered state 1, node $v$ remains in this state for a random time period of length $X_1(v)$, and then the state of node $v$ changes into~2.
\end{itemize}
We can interpret the model in the context of computer or biological viruses as follows: State 0 refers to nodes which are vulnerable of receiving a virus. State~1 refers to nodes carrying and spreading the virus but displaying no symptoms. State~2 refers to nodes carrying and spreading the virus and displaying symptoms. We denote by $\flood_1(G)$ the time until every node in the graph has received the virus, and by $\flood_2(G)$ the time until every node displays symptoms.

The above model can be analyzed using the weighted random graph where all links have a random exponentially distributed weight of rate parameter $\lambda = \kappa/\delta$ with $X_0(v) = 0$, and $X_1(v)$ modeling the delay until an infected node displays symptoms. 
Then for a random root node $u^*$,
\begin{align*}
 \flood_1(G)
 &\weqd \max_v W_G(u^*,v), \\
 \flood_2(G)
 &\weqd \max_v \Big( W_G(u^*,v) + X_1(v) \Big).
\end{align*}
\cred{
Applying formula \eqref{eq:DelayedFloodingTypical} in Theorem~\ref{the:DelayedFloodingTime} with $\lambda_1 = \infty$ corresponding to $X_{1}(v) = 0$, we have \whp,
\begin{align}
\label{eq:flood1}
 \flood_1(G)
 &\weq \left(\frac{1}{\lambda(\nu-1)} + \frac{1}{\lambda\delta}\right) \log n + o(\log n).
\end{align}
Note that the same formula can also be obtained from \eqref{eq:UndelayedFloodingTypical}. Applying \eqref{eq:DelayedFloodingTypical} again, we have \whp,
\begin{align}
 \label{eq:flood2}
 \flood_2(G)
 &\weq \left(\frac{1}{\lambda(\nu-1)} + \frac{1}{\lambda\delta\wedge\lambda_1}\right) \log n + o(\log n).
\end{align}
These two formulas lead to the following results.
}
\begin{corollary}
\label{the:Broadcasting}
For a random $\delta$-regular graph $G$ on $n$ nodes with $\delta \ge 3$, when the distribution of $X_1(v)$ has an exponential tail of rate $\lambda_1$ according to \eqref{eq:ExpTail},
\[
 \flood_1(G)
 \weq \frac{2}{\kappa} \left( \frac{\delta-1}{\delta-2} \right) \log n + o(\log n)
\]
and
\[
 \flood_2(G)
 \weq \left( \frac{\delta}{\kappa(\delta-2)} + \frac{1}{\kappa \wedge \lambda_1} \right) \log n + o(\log n)
\]
with high probability as $n \to \infty$.
\end{corollary}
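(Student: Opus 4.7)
The plan is to apply Theorem~\ref{the:DelayedFloodingTime} directly to the random $\delta$-regular model, in the form of the asymptotics \eqref{eq:flood1} and \eqref{eq:flood2} derived just above the corollary. The proof reduces to three steps: checking the regularity hypotheses, identifying the push dynamics with weighted first passage percolation, and then computing.

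First I would verify that the random $\delta$-regular graph satisfies the conditions \eqref{eq:DegreeLim}--\eqref{eq:DegreeMinMax}. The empirical degree distribution is the point mass at $\delta$, so $f_n(k) = f(k) = 1(k=\delta)$; condition \eqref{eq:DegreeVariance} is trivial for any $\epsilon > 0$, and \eqref{eq:DegreeMinMax} holds because $\delta \ge 3$ by hypothesis. The mean of $f$ and the mean of its downshifted size-biasing are $\mu = \delta$ and $\nu = \delta(\delta-1)/\delta = \delta - 1$.

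Second I would justify the identification of the continuous-time push dynamics with first passage percolation at link weight rate $\lambda = \kappa/\delta$. By Poisson thinning, each node $u$ contacts any fixed neighbor $v$ at the points of a Poisson process of rate $\kappa/\delta$, independently across ordered pairs $(u,v)$; by the memoryless property, once $u$ enters state $1$ at some time $\tau_u$, the additional time until $u$ next contacts $v$ is $\operatorname{Exp}(\kappa/\delta)$ and independent of everything before $\tau_u$. This is what underlies the distributional identities $\flood_1(G) \eqd \max_v W_G(u^*,v)$ and $\flood_2(G) \eqd \max_v\bigl(W_G(u^*,v) + X_1(v)\bigr)$ asserted in the paragraph preceding the corollary.

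Third I would substitute. Using $\nu - 1 = \delta - 2$ and $\lambda = \kappa/\delta$, one computes
\[
 \frac{1}{\lambda(\nu-1)} \weq \frac{\delta}{\kappa(\delta-2)},
 \qquad \frac{1}{\lambda\delta} \weq \frac{1}{\kappa},
 \qquad \frac{1}{\lambda\delta \wedge \lambda_1} \weq \frac{1}{\kappa \wedge \lambda_1}.
\]
Substitution into \eqref{eq:flood1} yields the leading coefficient
\[
 \frac{\delta}{\kappa(\delta-2)} + \frac{1}{\kappa}
 \weq \frac{\delta + (\delta-2)}{\kappa(\delta-2)}
 \weq \frac{2}{\kappa}\cdot\frac{\delta-1}{\delta-2},
\]
and substitution into \eqref{eq:flood2} yields $\frac{\delta}{\kappa(\delta-2)} + \frac{1}{\kappa \wedge \lambda_1}$, matching the two stated expressions. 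The main obstacle is conceptual rather than technical and lies in step two, namely phrasing the coupled contact-and-symptom dynamics as an edge-plus-node weighted percolation problem on $G$; once this is in place, the corollary is a mechanical specialization of Theorem~\ref{the:DelayedFloodingTime}.
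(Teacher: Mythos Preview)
Your proposal is correct and follows the same route as the paper: the paper's proof simply substitutes $\lambda = \kappa/\delta$ and $\nu = \delta - 1$ into \eqref{eq:flood1}--\eqref{eq:flood2} and simplifies the coefficient exactly as you do. Your steps one and two (checking the regularity conditions and the Poisson-thinning justification of the push-to-FPP coupling) are more explicit than the paper, which takes both as given from the discussion preceding the corollary, but the argument is the same.
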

\cred{
\begin{proof}
The results follow directly by substituting $\lambda = \delta/ \kappa$ and $\nu = \delta -1$ into \eqref{eq:flood1} and \eqref{eq:flood2}. The coefficient in \eqref{eq:flood1} simplifies by direct calculation into
\begin{align*}
 \frac{1}{\lambda(\nu-1)} + \frac{1}{\lambda\delta} = \frac{\delta}{\kappa(\delta-2)}+ \frac{1}{\kappa}  = \frac{2}{\kappa}\left(\frac{\delta-1}{\delta-2}\right).
\end{align*}
\end{proof}
}
\begin{figure}[h]
\centering
\includegraphics{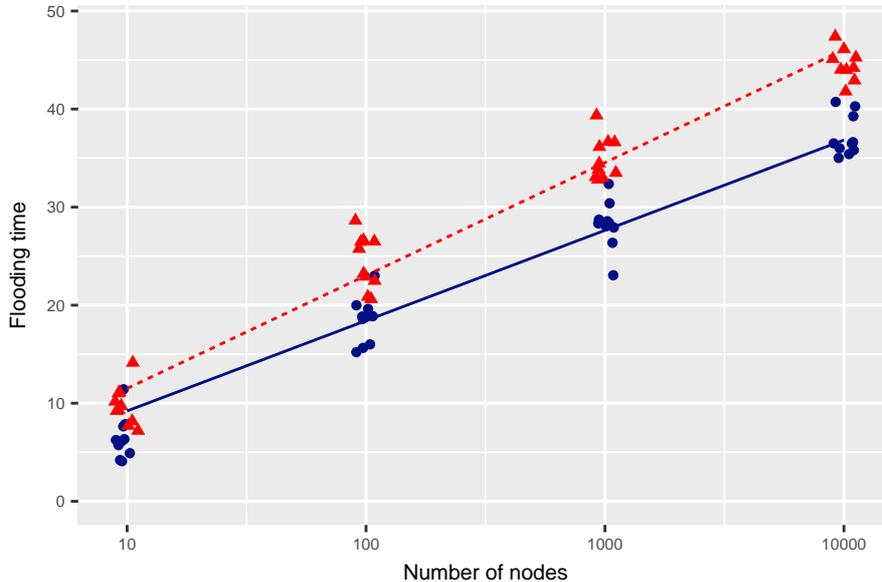}
\caption{\label{fig:FloodingTimes} Flooding times on random $\delta$-regular graphs with $\delta = 3$, $\kappa = 1$, and $\lambda_1=1/2$. Blue circles represent simulated values of $\flood_1(G)$. Red triangles represent simulated values of $\flood_2(G)$. The blue solid line and the red dashed line correspond to the limiting formulas of Corollary~\ref{the:Broadcasting}. (Color online.)} 
\end{figure}

Figure~\ref{fig:FloodingTimes} illustrates how the limiting approximations of Corollary~\ref{the:Broadcasting} relate to simulated values of the flooding times on 3-regular graphs. The sizes of the fluctuations around the theoretical values appear to be of constant order with respect to $n$. A constant order of fluctuations corresponds to the well-known fact in statistical extreme value theory that the maximum of $n$ independent exponential random numbers is approximately Gumbel-distributed around a value of size $\log n$. However, the additional randomness induced by the underlying random graph may cause the fluctuations to grow slowly with respect to $n$. Whether or not the fluctuations grow with $n$ is not possible to detect from simulations of modest size, because the growth rate of the fluctuations is at most $o(\log n)$.

Figure~\ref{fig:FloodingDynamics} describes simulated trajectories of node counts in different states in a random 3-regular graph of 1000 nodes. The trajectories are approximately S-shaped, with random horizontal shifts caused by the initial and final phases of the process.

\begin{figure}[h]
\centering
\includegraphics{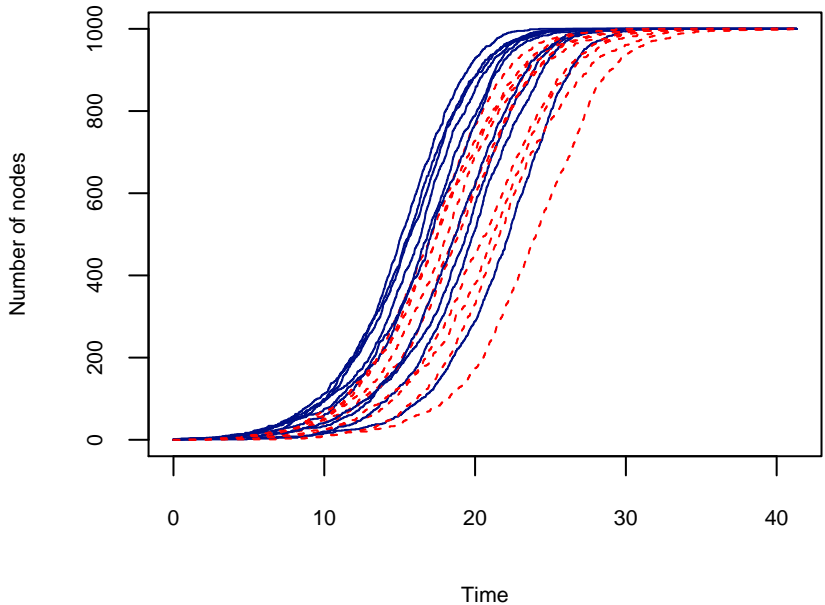}
\caption{\label{fig:FloodingDynamics} Simulated trajectories of the number of nodes in state~1 or state~2 (blue, solid) and the number of nodes in state 2 (red, dashed) for a random $\delta$-regular graph with $n=1000$, $\delta = 3$, $\kappa = 1$, and $\lambda_1=1/2$. (Color online.)} 
\end{figure}

\section{Proofs}
\label{Proof of the main result}

\subsection{Configuration model}

A standard method for studying the random graph $G = G(n, d^{(n)})$ is to investigate a related random multigraph. A \emph{multigraph} is a triplet $G = (V, E, \phi)$, where $V$ and $E$ are finite sets and $\phi: E \to \binom{V}{1} \cup \binom{V}{2}$. Here $\phi(e)$ refers to the set of one (loop) or two (non-loop) nodes incident to $e \in E$. A multigraph is called \emph{simple} if $\phi$ is one-to-one (no parallel links) and $\phi(E) \subset \binom{V}{2}$ (no loops). The degree of a node $i$ is defined by $\sum_{e \in E} \cred{\big(} 1( i \in \phi(e)) + 1( \{i\} = \phi(e) )\cred{\big)}$, that is, the number of links incident to $i$, with loops counted twice. A path of length $k \ge 0$ from $x_0$ to $x_k$ is a set of distinct nodes $\{x_0,x_1, \dots, x_k\}$ such that $\{x_{j-1}, x_j\} \in \phi(E)$ for all $j$. For a multigraph $G$ weighted by $W: E \to (0,\infty)$, we denote
\[
 W_G(u,v) \weq \inf_{\Gamma} \, \sum_{e \in \Gamma} W(e),
\]
where $\Gamma$ is the set of paths from $u$ to $v$. When $G$ is connected, the above formula defines a metric on $G$. 

\cred{
Let us recall the usual definition of the configuration model in \cite{Bollobas_1980}. Let $n$ be a positive integer and $d = (d_1,d_2,...,d_n)$ be a sequence of nonnegative integers. For each node $i\in [n]$ we attach $d_i$ distinct elements called half-edges. A pair of half-edges is called an edge. To obtain a random multigraph $G^*$, it is required that the sum of half-edges $d = (d_1,d_2,...,d_n)$ is even $\sum_{i=1}^{n}d_i = 2m$, where $m$ refers to the number of edges. Let $D_i$ be the set of half-edges of node $i$. Then the size of the set $D_i$ is $d_i$ and the sets $D_1,D_2,...,D_n$ are disjoint. Let $D= \bigcup_{i= 1}^{n}D_i$ be the collection of all the half-edges and let $E$ be a pairing of $D$ (partition into $m$ pairs) selected uniformly at random. The configuration model $G^* = G^*(n, d)$ is the multigraph $([n],E, \phi)$, where the function $\phi:E\to \binom{n}{1}\cup\binom{n}{2}$ is defined by $\phi(e) = \{i \in [n]: D_i \cap e \ne \emptyset\}$.
}
A key feature of the configuration model is that the conditional distribution of $G^*(n,d)$ given that $G^*(n,d)$ is simple equals the distribution of the random graph $G(n, d)$. Moreover, for a sequence of degree lists $d^{(n)}$ satisfying the regularity conditions \eqref{eq:DegreeLim}--\eqref{eq:DegreeVariance}, the probability that $G^*(n, d^{(n)})$ is simple is bounded away from zero \cite{Janson_Luczak_2009}. Therefore, any statement concerning $G^*(n, d^{(n)})$ which holds with high probability, also holds for $G(n, d^{(n)})$ with high probability. This is why in the sequel, we write $G$ in place of $G^*$ and the analysis of weighted distances will be conducted on the configuration model.

\subsection{Notations}

For a node $u$ in the weighted multigraph, we denote by $B(u,t) = \{W_G(u,v) \le t\}$ the set of nodes within distance $t \in [0,\infty]$ from $u$. For an integer $k \ge 0$, we define
\[
 T_{u}(k)
 \weq \min\{t \ge 0: \abs{B(u,t)} \ge k+1\},
\]
with the convention that $\min \emptyset = \infty$. We also denote by $S_{u}(k)$ the number of outgoing links from set $B(u,T_{u}(k))$. Then for any $k$ less than the component size of $u$:
\begin{itemize}
\item $T_{u}(k)$ equals the distance from $u$ to its $k$-th nearest neighbor, and
\item $S_u(k)$ equals the number of outgoing links from the set of nodes consisting of $u$ and its $k$ nearest neighbors.
\end{itemize}
Moreover, $T_u(k) = \infty$ and $S_u(k) = 0$ for all $k$ greater or equal to the component size of $u$.

Throughout in the sequel, we assume that $G$ satisfies the regularity conditions \eqref{eq:DegreeLim}--\eqref{eq:DegreeMinMax}. We introduce the scale parameters
\begin{align*}
 \alpha_n &= \lfloor \log^3 n \rfloor,\\
 \beta_n &=\Big\lfloor 3\sqrt{\tfrac{\mu}{\nu-1}n\log n} \Big\rfloor
\end{align*}
and, with high probability, \cite[Proposition 4.2]{Amini_Draief_Lelarge_2013} (see alternatively \cite[Lemma 3.3]{Ding_Kim_Lubetzky_Peres_2010} or \cite[Proposition 4.9]{Bhamidi_VanDerHofstad_Hooghiemstra_2010})
\begin{equation} 
 \label{eq:DistanceUpper}
 W_G(u,v)
 \wle T_{u}(\beta_n) + T_v(\beta_n)
\end{equation}
for all nodes $u$ and $v$ in the graph $G$. We will next analyze the behavior of $T_{u}(\beta_n)$ and $T_v(\beta_n)$ in typical (uniformly randomly chosen node) and extremal cases.

\subsection{Upper bound on weighted distances}

The following upper bound on the weighted distances is a sharpened version of \cite[Lemmas 4.7, 4.12]{Amini_Draief_Lelarge_2013}. Below we assume that $X \ge 0$ is an arbitrary random number and $u^*$ is a uniformly randomly chosen node, such that $X$, $u^*$, and the graph $G$ are mutually independent, and independent of the weights $(W(e))_{e \in E(G)}$, where weights $W(e)$ are exponentially distributed with rate $\lambda > 0$. We use $\cF_{S_{u^*}}$ to denote the sigma-algebra generated by $S_{u^*} = (S_{u^*}(0), \dots, S_{u^*}(n-1))$.  

\begin{lemma}
\label{the:UpperBoundDistancesGen}
Fix integers $0 \le a < b < n$ and numbers $c_1, c_2 \ge 0$, and let $\cR$ be an $\cF_{S_{u^*}}$-measurable event on which $S_{u^*}(k) \ge c_1 + c_2 k$ for all $a \le k \le b-1$. \cred{For any $0 < \theta <  \lambda (c_1+c_2 a)$},
\begin{multline}
 \label{eq:UpperBoundDistancesZero}
 \cred{
 \E(e^{\theta (T_{u^*}(b) - T_{u^*}(a) + X)}\mid \cR)
 }\\
 \wle M_X(\theta) \exp \left( \frac{\theta}{\theta_1 - \theta}
 + \frac{\theta}{\theta_0} \left( \frac{1}{a+1} + \log \frac{b-1}{a+1} \right) \right),
\end{multline}
where $M_X(\theta) = \E e^{\theta X}$, $\theta_0 = \lambda c_2 - \frac{(\theta - \lambda c_1)_+}{a+1}$ and $\theta_1 = \lambda (c_1+c_2 a)$.
\end{lemma}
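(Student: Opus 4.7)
The plan is to exploit the standard exploration identity for the configuration model with exponential edge weights: conditional on the sigma-algebra $\cF_{S_{u^*}}$, the successive nearest-neighbor increments $T_{u^*}(k+1) - T_{u^*}(k)$ are independent exponential random variables with respective rates $\lambda S_{u^*}(k)$. This follows from memorylessness, because at the moment when the $k$-th nearest neighbor has just been absorbed into the explored ball, the $S_{u^*}(k)$ live outgoing half-edges carry residual weights that are i.i.d.\ $\mathrm{Exp}(\lambda)$, and their minimum is $\mathrm{Exp}(\lambda S_{u^*}(k))$. Combined with the independence of $X$ from the graph and edge weights, this gives
\[
 \E\bigl(e^{\theta(T_{u^*}(b) - T_{u^*}(a) + X)} \cond \cF_{S_{u^*}}\bigr)
 \weq M_X(\theta) \prod_{k=a}^{b-1} \frac{\lambda S_{u^*}(k)}{\lambda S_{u^*}(k) - \theta},
\]
whenever $\theta < \lambda \min_{a \le k \le b-1} S_{u^*}(k)$. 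Since $\cR$ is $\cF_{S_{u^*}}$-measurable, the tower property reduces the lemma to a pointwise upper bound on this product on $\cR$.

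Next I would use the fact that $r \mapsto r/(r-\theta)$ is decreasing for $r > \theta$. On $\cR$, the hypothesis $S_{u^*}(k) \ge c_1 + c_2 k$ together with the standing condition $\theta < \lambda(c_1 + c_2 a) = \theta_1$ (which in particular keeps every denominator positive for $k \ge a$) gives
\[
 \prod_{k=a}^{b-1} \frac{\lambda S_{u^*}(k)}{\lambda S_{u^*}(k) - \theta}
 \wle \prod_{k=a}^{b-1} \frac{\lambda(c_1+c_2 k)}{\lambda(c_1+c_2 k) - \theta}.
\]
Taking logarithms and applying $\log(1+x) \le x$ linearizes the product into the sum $\sum_{k=a}^{b-1} \theta / (\lambda(c_1+c_2 k) - \theta)$.

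Finally I would split off the $k=a$ summand, which contributes exactly $\theta/(\theta_1 - \theta)$, and bound each remaining term ($k \ge a+1$) by $\theta/(k\theta_0)$. The main computational step is verifying $\lambda(c_1+c_2 k) - \theta \ge k\theta_0$ for $k \ge a+1$, and this is where the $(x)_+$ in the definition of $\theta_0$ arises naturally through a two-case analysis: when $\theta \le \lambda c_1$ the elementary bound $\lambda c_1 + \lambda c_2 k - \theta \ge \lambda c_2 k$ matches $\theta_0 = \lambda c_2$; when $\theta > \lambda c_1$ the residual $\lambda c_2 k - (\theta - \lambda c_1)$ factors as $k\bigl(\lambda c_2 - (\theta - \lambda c_1)/k\bigr) \ge k\bigl(\lambda c_2 - (\theta - \lambda c_1)/(a+1)\bigr) = k\theta_0$, using $k \ge a+1$. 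An integral comparison then yields $\sum_{k=a+1}^{b-1} 1/k \le 1/(a+1) + \log((b-1)/(a+1))$, which produces the second exponentiated factor in the claimed bound.

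The only delicate point is the two-case analysis that forces the $(x)_+$ into $\theta_0$; everything else is standard moment generating function calculus together with a sum-to-integral comparison.
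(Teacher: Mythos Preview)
Your proof is correct and follows essentially the same route as the paper: compute the conditional MGF as a product of exponential MGFs $\prod_{k=a}^{b-1}\lambda S_{u^*}(k)/(\lambda S_{u^*}(k)-\theta)$, linearize via $\log(1+x)\le x$, split off the $k=a$ term to produce $\theta/(\theta_1-\theta)$, bound the remaining denominators from below by $k\theta_0$, and finish with the integral comparison $\sum_{k=a+1}^{b-1}1/k\le 1/(a+1)+\log((b-1)/(a+1))$. The only cosmetic difference is that the paper handles the inequality $\lambda(c_1+c_2k)-\theta\ge k\theta_0$ in one line via $(\theta-\lambda c_1)_+$ rather than your explicit two-case split, and it bounds the denominator $\lambda S_{u^*}(k)-\theta$ directly on $\cR$ instead of first invoking monotonicity of $r\mapsto r/(r-\theta)$.
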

\begin{proof}
A key property of the model is that conditionally on $S_{u^*}$, the random numbers  $T_{u^*}(k+1) - T_{u^*}(k)$
are independent and exponentially distributed with rates $\lambda S_{u^*}(k)$.  On the event $\cR$, we see that $\lambda S_{u^*}(a) \ge \theta_1$, and for all $a+1 \le k \le b-1$,
\begin{align*}
 \lambda S_{u^*}(k) - \theta \wge \lambda c_1 + \lambda c_2 k - \theta  \wge \left( \lambda c_2 - \frac{(\theta - \lambda c_1)_+}{k} \right) k \wge \theta_0 k.
\end{align*}
\cred{As a consequence,}
\begin{align*}
 \E(e^{\theta (T_{u^*}(b) - T_{u^*}(a) + X)}\mid \cred{\cF_{S_{u^*}}})  
 &\weq M_X(\theta) \prod_{k=a}^{b-1} \frac{\lambda S_{u^*}(k)}{\lambda S_{u^*}(k) - \theta} \\
 &\weq M_X(\theta)
 \prod_{k=a}^{b-1} \left( 1 + \frac{\theta}{\lambda S_{u^*}(k) - \theta} \right) \\
 &\wle M_X(\theta)
 \exp \left( \sum_{k=a}^{b-1} \frac{\theta}{\lambda S_{u^*}(k) - \theta} \right).
\end{align*}
\cred{Separating the first term from the sum and by the choice of the event $\cR$, we obtain
\begin{align*}
 \E(e^{\theta (T_{u^*}(b) - T_{u^*}(a) + X)}\mid \cR)
 &\wle M_X(\theta)
 \exp \left( \frac{\theta}{\theta_1 - \theta} + \frac{\theta}{\theta_0} \sum_{k=a+1}^{b-1} \frac{1}{k} \right).
\end{align*}
By integration we have $\sum_{k = m}^{n}\frac{1}{k}\le \log(\frac{n}{m-1})$ for any integers $2\le m<n$. Hence, separating the first term again from the sum, we have the desired result,
\begin{align*}
 \E(e^{\theta (T_{u^*}(b) - T_{u^*}(a) + X)}\mid \cR)
 &\wle M_X(\theta)
 \exp \left( \frac{\theta}{\theta_1 - \theta} + \frac{\theta}{\theta_0} \left( \frac{1}{a+1} + \log \frac{b-1}{a+1} \right) \right).
\end{align*}
}
\end{proof}

\subsection{Upper bounds on nearest neighbor distances}

\begin{proposition}
\label{the:InitialGrowth}
For any $0 \le p \le 1$ and $\epsilon > 0$, any random variable $X \ge 0$ independent of $G$,
\begin{align*}
 \pr\left( T_{u^*}(\alpha_n) + X > \Big( \frac{p}{\lambda \delta \wedge \theta^*} + \epsilon \Big) \log n \right)
 \weq o(n^{-p}),
\end{align*}
where $\theta^* = \sup\{ \theta \ge 0: \E e^{\theta X} < \infty\} > 0$.
\end{proposition}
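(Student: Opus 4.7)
The plan is to combine the exponential Markov inequality with Lemma~\ref{the:UpperBoundDistancesGen} applied at $a = 0$ and $b = \alpha_n$. Fix $c_1 = \delta$ (from the minimum-degree assumption) and a constant $c_2 > 0$, and introduce an $\cF_{S_{u^*}}$-measurable event $\cR$ on which $S_{u^*}(k) \ge c_1 + c_2 k$ holds for every $0 \le k \le \alpha_n - 1$. For any $\theta \in (0, \lambda\delta \wedge \theta^*)$ and $t > 0$ I split
\[
 \pr\bigl( T_{u^*}(\alpha_n) + X > t \bigr)
 \wle \pr(\cR^c)
 + e^{-\theta t}\, \E\bigl( e^{\theta(T_{u^*}(\alpha_n) + X)} \mid \cR \bigr),
\]
the second summand being the Markov bound applied to $\{T_{u^*}(\alpha_n) + X > t\} \cap \cR$, where I have used $T_{u^*}(0) = 0$ to identify $T_{u^*}(b) - T_{u^*}(a) + X$ with $T_{u^*}(\alpha_n) + X$.

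On $\cR$ the parameters of Lemma~\ref{the:UpperBoundDistancesGen} simplify to $\theta_1 = \lambda\delta$ and $\theta_0 = \lambda c_2$ (since $\theta < \lambda\delta = \lambda c_1$ kills the positive part), so the lemma yields
\[
 \E\bigl( e^{\theta(T_{u^*}(\alpha_n) + X)} \mid \cR \bigr)
 \wle M_X(\theta)\, \exp\!\Bigl( \tfrac{\theta}{\lambda\delta - \theta} + \tfrac{\theta}{\lambda c_2}\bigl(1 + \log(\alpha_n - 1)\bigr) \Bigr).
\]
Because $\theta < \theta^*$ ensures $M_X(\theta) < \infty$ and $\alpha_n = \lfloor \log^3 n \rfloor$ gives $\log(\alpha_n - 1) = O(\log\log n)$, the right hand side is bounded by $C_\theta (\log n)^{C_\theta}$, which is merely polylogarithmic in $n$.

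Setting $t = (p/(\lambda\delta \wedge \theta^*) + \epsilon)\log n$ and $\theta = (\lambda\delta \wedge \theta^*) - \eta$ with $\eta > 0$ small enough that $\theta t \ge (p + \gamma)\log n$ for some $\gamma > 0$, we obtain $e^{-\theta t} \le n^{-p - \gamma}$, which absorbs the polylogarithmic factor and makes the second summand $o(n^{-p})$.

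The main obstacle is to verify $\pr(\cR^c) = o(n^{-p})$ uniformly in $p \in [0,1]$. In the configuration model, adding the $k$-th nearest neighbor $v_k$ changes $S_{u^*}$ by $d_{v_k} - 2 - 2 b_k$, where $b_k$ counts the back-edges created at step $k$; consequently $S_{u^*}(k) \ge \delta + k(\delta - 2) - 2 B_k$ with $B_k = b_1 + \dots + b_k$. An elementary pairing-model calculation gives $\E B_{\alpha_n} = O(\alpha_n^2 / n) = O(\log^6 n / n)$, and Markov then yields $\pr(\cR^c) = o(n^{-p})$ for every $p < 1$. The boundary case $p = 1$ requires a sharper tail bound on $B_{\alpha_n}$---for example a Chernoff-type estimate on the weakly dependent back-edge indicators, possibly combined with a slight enlargement of $\cR$ that tolerates a slowly growing number of back-edges while still preserving the linear lower bound $S_{u^*}(k) \ge c_1 + c_2 k$ for small $k$.
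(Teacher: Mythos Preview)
Your approach via Markov's inequality and Lemma~\ref{the:UpperBoundDistancesGen} with $a=0$, $b=\alpha_n$, $c_1=\delta$, $c_2=\delta-2$ is exactly what the paper does on its ``good'' event $\cR_1=\{S_{u^*}(k)\ge \delta+(\delta-2)k\text{ for all }0\le k\le\alpha_n-1\}$, and your treatment of that part is correct. The gap you identify at $p=1$ is real, and neither of your proposed fixes quite closes it.

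A Chernoff bound on $B_{\alpha_n}$ cannot help: the back-edge indicators have total mean of order $\alpha_n^2/n$, so $\pr(B_{\alpha_n}\ge 1)$ is genuinely $\Theta(\log^6 n/n)$, and no concentration inequality can push the probability of a nonzero sum below its mean. As for ``enlarging $\cR$ to tolerate a few back-edges while preserving $S_{u^*}(k)\ge c_1+c_2k$'': any enlargement that absorbs a back-edge at step~$k=1$ forces $c_1<\delta$, hence $\theta_1=\lambda c_1<\lambda\delta$, and then the Markov bound is only valid for $\theta<\lambda c_1$. With such $\theta$ you cannot make $e^{-\theta t_n}$ decay like $n^{-p}$ for $p$ near~$1$; you only get $O(n^{-\epsilon_1})$ for some small $\epsilon_1>0$.

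The paper resolves this by a two-level splitting rather than a single event. It introduces a second, weaker event $\cR_2=\{S_{u^*}(k)\ge 1+(\delta-2)k\text{ for all }k\le\alpha_n-1\}$ and writes
\[
\pr(\cA_n)\ \le\ \pr(\cA_n\mid\cR_1)+\pr(\cA_n\mid\cR_2\cap\cR_1^c)\,\pr(\cR_1^c)+\pr(\cR_2^c).
\]
On $\cR_1$ your argument gives $o(n^{-p})$. On $\cR_2\cap\cR_1^c$ the lemma with $c_1=1$ gives only $\pr(\cA_n\mid\cR_2\cap\cR_1^c)=O(n^{-\epsilon_1/2})$, but this is multiplied by $\pr(\cR_1^c)=o(n^{-1}\log^{10}n)$ (cited from \cite[Lemma~4.6]{Amini_Draief_Lelarge_2013}), so the product is $o(n^{-1})$. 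Finally $\pr(\cR_2^c)=o(n^{-3/2})$ from the same reference. The point is that one does not need the weak bound on $\cR_2\setminus\cR_1$ to be $o(n^{-p})$ on its own; the smallness of $\pr(\cR_1^c)$ supplies the missing factor.
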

\begin{proof}
Let $t_n = (\frac{p}{\lambda \delta \wedge \theta^*} + \epsilon ) \log n$. An upper bound for the event under study $\cA_n = \{T_{u^*}(\alpha_n) + X > t_n\}$ is obtained by
\begin{equation}
\label{eq:UpperSplit}
\begin{aligned}
 \pr\left( \cA_n \right)
 &\wle \cred{\pr(\cA_n\mid\cR_1)} + \cred{\pr(\cA_n\mid\cR_2\cap\cR_1^c)} \, \pr(\cR_1^c) + \pr(\cR_2^c),
\end{aligned}
\end{equation}
where 
\begin{align*}
 \cR_1 &\weq \left\{ S_{u^*}(k) \ge \delta + (\delta-2)k \quad \text{for all $0 \le k \le \alpha_n -1$} \right\}, \\
 \cR_2 &\weq \left\{ S_{u^*}(k) \ge 1 + (\delta-2)k \quad \text{for all $0 \le k \le \alpha_n -1$} \right\}.
\end{align*}
\if
fa
\cred{
In these both events we require that, whenever a new nearest neighbor of $u^*$ is found, at least $\delta-2$ new outgoing half-edges are reached. Event $\cR_1$ holds when each outgoing half-edge leads to a new nearest neighbor of $u^*$ until the last $(\alpha_n-1)$ new nearest neighbor is found. While, event $\cR_2$ occurs when at most $\delta-1$ half-edges does not lead to a new nearest neighbor of $u^*$ for each step $0\le k \le \alpha_n -1$.
}
\fi
We will next analyze the conditional probabilities in \eqref{eq:UpperSplit}.

(i) To obtain an upper bound of $\cred{\pr(\cA_n\mid\cR_1)}$, by applying Lemma~\ref{the:UpperBoundDistancesGen} with $a=0$, $b = \alpha_n$, $c_1= \delta$, and $c_2 = \delta-2$ and  Markov's inequality, we find that
\[
 \cred{\pr(\cA_n\mid\cR_1)}
 \wle M_X(\theta) \exp \left( \frac{\theta}{\theta_1 - \theta}
 + \frac{\theta}{\theta_0} \left( 1 + \log \alpha_n \right) - \theta t_n \right) \\
\]
for all $0 < \theta < \theta_1 \wedge \theta^*$\cred{
, where $\theta_0 = \lambda (\delta-2)$ and $\theta_1 =\lambda\delta$
}. Now we may choose 
\cred{
$\theta\ge (1-\frac{\lambda\delta\wedge\theta^{*}}{2(p+\epsilon(\lambda\delta\wedge\theta^{*}))}\epsilon)(\lambda\delta\wedge\theta^{*})$
to have} $\theta t_n \ge (p + \frac12 \epsilon (\theta_1 \wedge \theta^*))\log n$. 
\cred{Note that $\theta$ can be arbitrary close to its maximum value $\lambda\delta\wedge\theta^{*}$ if we choose $\epsilon>0$ to be sufficiently small.
Since the constant term $\frac{\theta}{\theta_1-\theta}$ is negligibly small compared to $\log\alpha_n = \Theta(\log\log n)$, we have for large values of $n$, 
}
\[
 \frac{\theta}{\theta_1 - \theta}
 + \frac{\theta}{\theta_0} \left( 1 + \log \alpha_n \right)
 \wle \frac14 \epsilon (\theta_1 \wedge \theta^*) \log n.
\]
\cred{
These two inequalities imply that
}
\begin{equation}
\label{eq:UpperSplit1}
 \cred{\pr(\cA_n\mid\cR_1)}
 \wle M_X(\theta) n^{-(p+\frac14 \epsilon (\lambda \delta \wedge \theta^*))}
 \weq o(n^{-p}).
\end{equation}

(ii) For an upper bound of $\pr_{\cR_2 \setminus \cR_1}( \cA_n)$,
we apply Lemma~\ref{the:UpperBoundDistancesGen} with $a=0$, $b = \alpha_n$, $c_1= 1$, and $c_2 = \delta-2$ and Markov's inequality to conclude that
\[
 \cred{\pr(\cA_n\mid\cR_2\cap\cR_1^c)}
 \wle M_X(\theta) \exp \left( \frac{\theta}{\lambda - \theta}
 + \frac{\theta}{\lambda (\delta-2)} \left( 1 + \log \alpha_n \right) - \theta t_n \right)
\]
for all $0 < \theta < \lambda \wedge \theta^*$. For any such $\theta$, we see that
$
 \theta t_n \ge \epsilon_1 \log n
$
with $\epsilon_1 = \theta (\frac{p}{\lambda \delta \wedge \theta^*} + \epsilon ) > 0$.
Because
\[
 \frac{\theta}{\lambda - \theta}
 + \frac{\theta}{\lambda (\delta-2)} \left( 1 + \log \alpha_n \right)
 \wle \frac12 \epsilon_1 \log n
\]
for all large $n$, it follows that
\begin{equation}
\label{eq:UpperSplit2}
 \cred{\pr(\cA_n\mid\cR_2\cap\cR_1^c)}
 \weq O(n^{- \epsilon_1/2}).
\end{equation}

\cred{
Note that our $S_u(k)$ has the same distribution as the exploration process in \text{Section $4.1$} in \cite{Amini_Draief_Lelarge_2013}. Hence}
 by \cite[Lemma 4.6]{Amini_Draief_Lelarge_2013}, $\pr\cred{(\cR_1^c)} = o(n^{-1} \log^{10} n)$ and $\pr\cred{(\cR_2^c)} = o(n^{-3/2})$. Hence by substituting the bounds \eqref{eq:UpperSplit1} and \eqref{eq:UpperSplit2} into \eqref{eq:UpperSplit} it follows that
\[
 \pr\left( \cA_n \right)
 \wle o(n^{-p}) + O(n^{- \epsilon_1/2})  o(n^{-1} \log^{10} n) + o(n^{-3/2})
 \weq o(n^{-p}).
\]
\end{proof}

\subsection{Upper bounds on moderate distances}

\begin{proposition}
\label{the:ModerateGrowth}
For any $\epsilon > 0$,
\begin{align*}
 \pr\left(T_{u^*}(\beta_n) - T_{u^*}(\alpha_n) > (\frac{1}{2\lambda(\nu-1)}+\epsilon)\log n \right)
 \weq o(n^{-1}).
\end{align*}
\end{proposition}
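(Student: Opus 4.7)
The plan is to mirror the structure of Proposition~\ref{the:InitialGrowth}: introduce a ``good'' regularity event $\cR$ on which the exploration process grows at essentially its typical rate, apply Lemma~\ref{the:UpperBoundDistancesGen} conditional on $\cR$, and control the complementary event directly. Since we are now in the moderate regime $\alpha_n \le k \le \beta_n - 1$ where $S_{u^*}(k)$ should behave like $(\nu-1)k$, I would define
\[
 \cR \weq \bigl\{\, S_{u^*}(k) \ge (\nu-1)(1-\eta)\, k \quad\text{for all $\alpha_n \le k \le \beta_n - 1$} \,\bigr\}
\]
for a small auxiliary constant $\eta > 0$ to be tuned. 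The splitting
\[
 \pr\!\left(T_{u^*}(\beta_n) - T_{u^*}(\alpha_n) > t_n\right)
 \wle \pr\bigl(T_{u^*}(\beta_n) - T_{u^*}(\alpha_n) > t_n \mid \cR\bigr) + \pr(\cR^c),
\]
with $t_n = (\tfrac{1}{2\lambda(\nu-1)}+\epsilon)\log n$, then reduces the statement to two bounds, each of which must be $o(n^{-1})$.

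For the first term, I would apply Lemma~\ref{the:UpperBoundDistancesGen} with $X=0$, $a = \alpha_n$, $b = \beta_n$, $c_1 = 0$, and $c_2 = (\nu-1)(1-\eta)$. The key asymptotic input is that
\[
 \log\frac{\beta_n - 1}{\alpha_n + 1} \weq \tfrac{1}{2}\log n + o(\log n),
\]
since $\beta_n$ is of order $\sqrt{n\log n}$ while $\alpha_n$ is polylogarithmic; and that $\theta_1 = \lambda c_2 \alpha_n \to \infty$, so the $\theta/(\theta_1 - \theta)$ term is harmless. With $\theta_0 = \lambda(\nu-1)(1-\eta) - \theta/(\alpha_n+1)$, Markov's inequality gives
\[
 \pr\bigl(T_{u^*}(\beta_n) - T_{u^*}(\alpha_n) > t_n \mid \cR\bigr)
 \wle \exp\!\left(\tfrac{\theta}{\theta_0}\cdot\tfrac{1}{2}\log n - \theta\, t_n + o(\log n)\right).
\]
Choosing $\eta$ small so that $\tfrac{1}{2\theta_0} < \tfrac{1}{2\lambda(\nu-1)} + \tfrac{\epsilon}{2}$ for large $n$, the coefficient of $\log n$ in the exponent becomes $-\theta\,(\tfrac{\epsilon}{2} + o(1))$. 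Since $\theta$ may be taken as large as we wish within $(0,\theta_1)$, picking $\theta = 4/\epsilon$ makes this upper bound of order $n^{-2+o(1)} = o(n^{-1})$.

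The main obstacle is the bound $\pr(\cR^c) = o(n^{-1})$, which is a statement about the early exploration of the configuration model in the super-logarithmic regime. The strategy here is to show that, as long as the explored set $B(u^*, T_{u^*}(k))$ has size at most a small multiple of $\beta_n = O(\sqrt{n\log n})$, each newly discovered half-edge contributes a fresh neighbor of forward degree distributed as the downshifted size-biased law (with mean $\nu$) with only $O(\beta_n^2 / n) = O(\log n)$ correction in total; this matches the hypotheses under which \cite[Proposition 4.2]{Amini_Draief_Lelarge_2013} (or \cite[Lemma 3.3]{Ding_Kim_Lubetzky_Peres_2010}) is proved. I would accordingly cite the corresponding concentration/martingale bound from \cite{Amini_Draief_Lelarge_2013} to conclude that the exploration counter $S_{u^*}(k)$ stays within a $(1\pm\eta)$ factor of $(\nu-1)k$ throughout $\alpha_n \le k \le \beta_n-1$ with probability $1-o(n^{-1})$; the initial segment $k \le \alpha_n$ of Proposition~\ref{the:InitialGrowth} already ensures the process does not die out before reaching $\alpha_n$, so that $S_{u^*}(\alpha_n)$ is at least of order $\alpha_n$. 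Combining the two bounds completes the proof.
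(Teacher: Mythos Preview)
Your proposal is correct and follows essentially the same route as the paper: split on a good event where $S_{u^*}(k)\ge c_2 k$ for $\alpha_n\le k\le\beta_n-1$, apply Lemma~\ref{the:UpperBoundDistancesGen} with $X=0$, $a=\alpha_n$, $b=\beta_n$, $c_1=0$, then invoke the concentration estimate from \cite{Amini_Draief_Lelarge_2013} (the paper cites their Lemma~4.9, obtaining $\pr(\cR^c)=o(n^{-3/2})$) for the complement. The only cosmetic differences are that you introduce a tuning parameter $\eta$ in $c_2=(\nu-1)(1-\eta)$ and use the sharp asymptotic $\log(\beta_n/\alpha_n)=\tfrac12\log n+o(\log n)$, whereas the paper fixes a specific $c_2$ and uses the cruder bound $\beta_n/\alpha_n\le n$; both choices lead to the same conclusion after picking $\theta>2/\epsilon$.
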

\begin{proof}
Denote $c = \frac{1}{2\lambda(\nu-1)}$ and  $t_n = (c + \epsilon) \log n$. Set $c_1=0$ and $c_2 = 1/\lambda c$. Fix a number $\theta > \frac{2}{\epsilon}$, and set $\theta_0 = \lambda c_2 - \frac{\theta}{\alpha_n+1}$ and $\theta_1 = \lambda c_2 \alpha_n$. Then for all sufficiently large values of $n$, we see that $0 < \theta < \theta_1$. When we apply Lemma~\ref{the:UpperBoundDistancesGen} with $X=0$, $a = \alpha_n$, and $b = \beta_n$ and Markov's inequality, we find that on the event $\cR_3$ that $S_{u^*}(k) \ge c_2 k$ for all $\alpha_n \le k \le \beta_n -1$,
\begin{align*}
 &\cred{\pr(T_{u^*}(\beta_n) - T_{u^*}(\alpha_n) > t_n\mid \cR_3)} \\
 &\wle \exp \left( \frac{\theta}{\theta_1 - \theta}
 + \frac{\theta}{\theta_0} \left( \frac{1}{\alpha_n} + \log \frac{\beta_n}{\alpha_n} \right) - \theta t_n \right) \\
 &\weq \exp \left( \frac{\theta}{\lambda c_2 \alpha_n - \theta}
 + \frac{\theta}{\lambda c_2 - \frac{\theta}{\alpha_n+1}} \left( \frac{1}{\alpha_n} + \log \frac{\beta_n}{\alpha_n} \right)
 - (c + \epsilon)\theta \log n \right).
\end{align*}
Note that $\beta_n/\alpha_n \wle n$. Because $\alpha_n \to \infty$, we see that
\begin{align*}
\cred{\pr(T_{u^*}(\beta_n) - T_{u^*}(\alpha_n) > t_n\mid \cR_3)} &\wle \exp \Big( 
 (c + \epsilon/2) \theta \log n
 - (c + \epsilon)\theta \log n\Big)\\ 
 &\weq \exp \left(-\frac{\epsilon}{2}\theta \log n \right).
\end{align*}
Due to our choice of $\theta$, the right side is $o(n^{-1})$. The claim follows from this because $\pr(\cR_3^c) =  o(n^{-3/2})$ by \cite[Lemma 4.9]{Amini_Draief_Lelarge_2013}.
\end{proof}

\subsection{Proof Theorem~\ref{the:DelayedFloodingTime}: Upper bounds}
\label{Proof of the upper bound}

Observe that $\E e^{\theta X_i(v)}$ is finite for $\theta < \lambda_i$ and infinite for $\theta > \lambda_i$ due to our assumption on exponential tails \eqref{eq:ExpTail}.  Hence by applying Proposition~\ref{the:InitialGrowth} with $p=1$,
\begin{align*}
 \pr\left( T_{v^*}(\alpha_n) + X_i(v^*) > \left( \frac{1}{\lambda \delta \wedge \lambda_i} +\epsilon \right) \log n \right)
 \weq o(n^{-1}),
\end{align*}
so that by applying the generic union bound
\begin{equation}
 \label{eq:GenericUnion}
 \pr( \max_v X(v) > t )
 \wle \sum_v \pr(X(v) > t)
 \weq n \pr(X(v^*) > t)
\end{equation}
it follows that
\begin{equation}
 \label{eq:UpperFirst}
 \max_v (T_v(\alpha_n) + X_i(v))
 \wle \left( \frac{1}{\lambda \delta \wedge \lambda_i} +\epsilon \right) \log n
 \qquad \whp
\end{equation}
Furthermore, by applying Proposition~\ref{the:InitialGrowth} with $p=0$, it follows that
\begin{equation}
 \label{eq:UpperSecond}
 T_{v^*}(\alpha_n)
 \wle T_{v^*}(\alpha_n) + X_i(v^*)
 \wle \epsilon \log n
 \qquad \whp,
\end{equation}
and by Proposition~\ref{the:ModerateGrowth} and the generic union bound \eqref{eq:GenericUnion}, \whp,
\begin{equation}
 \label{eq:UpperThird}
 T_{v^*}(\beta_n) - T_{v^*}(\alpha_n)
 \wle \max_v \Big( T_v(\beta_n) - T_v(\alpha_n) \Big)
 \wle \left( \frac{1}{2\lambda(\nu-1)} + \epsilon \right) \log n
\end{equation}
By combining \eqref{eq:UpperFirst} and \eqref{eq:UpperSecond} with \eqref{eq:UpperThird}, we conclude that \whp,
\begin{equation}
 \label{eq:MaxBetaUB}
 \max_v (T_v(\beta_n) + X_i(v))
 \wle \left( \frac{1}{\lambda \delta \wedge \lambda_i} + \frac{1}{2\lambda(\nu-1)} + 2 \epsilon \right) \log n
\end{equation}
and
\begin{equation}
 \label{eq:AvgBetaUB}
 T_{v^*}(\beta_n)
 \wle \left( \frac{1}{2\lambda(\nu-1)} + 2\epsilon \right) \log n.
\end{equation}

To prove an upper bound for \eqref{eq:DelayedPassageTypical}, observe that the distribution of $X_i(v^*)$ does not depend on the scale parameter $n$. Therefore, $X_i(v^*) \le \epsilon \log n$ with high probability. In light of \eqref{eq:DistanceUpper} and \eqref{eq:AvgBetaUB}, it follows that, \whp,
\begin{align*}
 W(u^*, v^*)
 &\weq X_0(u^*) + W_G(u^*, v^*) + X_1(v^*) \\
 &\wle X_0(u^*) + T_{u^*}(\beta_n) + T_{v^*}(\beta_n) + X_1(v^*) \\
 &\wle \left( \frac{1}{\lambda(\nu-1)} \log n + 6 \epsilon \right) \log n.
\end{align*}

To prove an upper bound for \eqref{eq:DelayedFloodingTypical}, observe that by applying \eqref{eq:DistanceUpper}, \eqref{eq:MaxBetaUB} and \eqref{eq:AvgBetaUB}, with high probability,
\begin{align*}
 \max_{v} W(u^*, v)
 &\weq \max_{v} \Big( X_0(u^*) + W_G(u^*, v) + X_1(v) \Big) \\
 &\wle \max_{v} \Big( X_0(u^*) + T_{u^*}(\beta_n) + T_v(\beta_n) + X_1(v) \Big) \\
 &\weq X_0(u^*) + T_{u^*}(\beta_n) + \max_{v} \Big( T_v(\beta_n + X_1(v)) \Big) \\
 &\wle \left( \frac{1}{2\lambda(\nu-1)} + 3 \epsilon \right) \log n
  + \left( \frac{1}{\lambda \delta \wedge \lambda_1} + \frac{1}{2\lambda(\nu-1)} + 2 \epsilon \right) \log n \\
 &\weq \left( \frac{1}{\lambda \delta \wedge \lambda_1} + \frac{1}{\lambda(\nu-1)} + 5 \epsilon \right) \log n.
\end{align*}

Finally, for an upper bound for \eqref{eq:DelayedFloodingMax}, observe that by \eqref{eq:DistanceUpper}, with high probability,
\begin{align*}
 \max_{u,v} W(u, v)
 &\weq \max_{u,v} \Big( X_0(u) + W_G(u, v) + X_1(v) \Big) \\
 &\wle \max_{u,v} \Big( X_0(u) + T_{u}(\beta_n) + T_v(\beta_n) + X_1(v) \Big) \\
 &\weq \max_u \Big( X_0(u) + T_u(\beta_n) \Big) + \max_v \Big( X_1(v) + T_v(\beta_n) \Big).
\end{align*}
And hence by \eqref{eq:MaxBetaUB}, it follows that, with high probability,
\begin{align*}
 \max_{u,v} W(u, v)
 &\wle \left( \frac{1}{\lambda \delta \wedge \lambda_0} + \frac{1}{\lambda (\nu-1)} + \frac{1}{\lambda \delta \wedge \lambda_1}  + 4 \epsilon \right) \log n.
\end{align*}
The above inequalities are sufficient to confirm the upper bounds in Theorem~\ref{the:DelayedFloodingTime} because $\epsilon > 0$ can be chosen arbitrarily small.

\subsection{Proof Theorem~\ref{the:DelayedFloodingTime}: Lower bounds}
\label{Proof of the lower bound}
The lower bounds are relatively straightforward generalizations of analogous results \eqref{eq:UndelayedPassageTypical}--\eqref{eq:UndelayedFloodingMax} for the model without node weights, which imply that for an arbitrarily small $\epsilon > 0$, the weighted graph distance $W_G$ satisfies \whp,
\begin{align}
 \label{eq:PassageTypical}
 \frac{W_G(u^*,v^*)}{\log n}
 &\wge \frac{1}{\lambda(\nu-1)} - \epsilon \\
 \label{eq:FloodingTypical}
 \frac{\max_v W_G(u^*,v)}{\log n}
 &\wge \frac{1}{\lambda(\nu-1)} + \frac{1}{\lambda \delta} - \epsilon \\
 \label{eq:FloodingMax}
 \frac{\max_{u,v} W_G(u,v)}{\log n}
 &\wge \frac{1}{\lambda(\nu-1)} + \frac{2}{\lambda \delta} - \epsilon. 
\end{align}
\cred{
We first prove the following lemma and we apply it later in the proof of the lower bounds.
}
\begin{lemma}
\label{the:MaxSumLB}
For every integer $n \ge 1$, let $A_n(i)$ and $B_n(i)$ be random numbers indexed by a finite set $i \in I_n$. Assume that
$(A_n(i))_{i \in I_n}$ are independent and identically distributed, and
\[
 \abs{I_n} \, \pr(A_n(i) > a_n)
 \ \to \ \infty,
\]
and that $B_n(i^*) > b_n$ with high probability, where $i^*$ is a uniformly random point of $I_n$, independent of $(B_n(i))_{i \in I_n}$. Assume also that $A_n(i)$ and $B_n(i)$ are independent for every $i \in I_n$. Then
\[
 \max_{i \in I_n} \left( A_n(i) + B_n(i) \right)
 \ > \ a_n + b_n
\]
with high probability.
\end{lemma}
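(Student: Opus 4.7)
The plan is to localize the search for a large sum to the set of indices where $B_n(i)$ is already guaranteed to be large, and then exhaust i.i.d.\ randomness on the $A_n$ side over that (numerous) sub-collection.

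For the first step, I would define $J_n = \{i \in I_n : B_n(i) > b_n\}$ and show that $|J_n| \ge |I_n|/2$ with high probability. Since $i^*$ is uniform on $I_n$ and independent of $(B_n(i))_{i \in I_n}$,
\[
\E |J_n| \weq \sum_{i \in I_n} \pr(B_n(i) > b_n) \weq |I_n| \, \pr(B_n(i^*) > b_n) \weq |I_n|(1 - o(1)).
\]
Applying Markov's inequality to the nonnegative random variable $|I_n| - |J_n|$ then gives $\pr(|J_n| < |I_n|/2) \le 2\pr(B_n(i^*) \le b_n) \to 0$.

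For the second step, I would condition on the $\sigma$-algebra generated by $(B_n(i))_{i \in I_n}$. The pointwise-independence hypothesis, combined with the fact that $(A_n(i))_{i \in I_n}$ is i.i.d., and with the understanding that the $A$- and $B$-collections arise from jointly independent sources of randomness (which is the natural reading given the setting in which the lemma will be applied), ensures that the $A_n(i)$'s remain i.i.d.\ after conditioning on the $B$-family. Writing $p_n = \pr(A_n(i) > a_n)$, the conditional probability that $A_n(i) \le a_n$ for every $i \in J_n$ is $(1-p_n)^{|J_n|} \le e^{-p_n |J_n|}$. Splitting according to whether $|J_n| \ge |I_n|/2$, one obtains
\[
\pr\bigl(A_n(i) \le a_n \text{ for all } i \in J_n\bigr) \wle e^{-p_n |I_n|/2} + \pr(|J_n| < |I_n|/2),
\]
and both terms vanish as $n \to \infty$ by the hypothesis $|I_n| p_n \to \infty$ and by the first step. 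Hence with high probability there exists $i \in J_n$ with $A_n(i) > a_n$, and for such $i$ one has $A_n(i) + B_n(i) > a_n + b_n$, which proves the claim.

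There is no real obstacle in this argument; the only item that needs a bit of care is the independence bookkeeping in the conditioning step, where one uses joint independence of the two collections rather than the weaker pointwise version explicitly stated.
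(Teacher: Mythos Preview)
Your argument is correct in structure but takes a different route from the paper's, and it leans on a hypothesis slightly stronger than what the lemma actually states.

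The paper works on the $A$-side first: it sets $M_n = \abs{\{i : A_n(i) > a_n\}}$ and $N_n = \abs{\{i : A_n(i) > a_n,\ A_n(i)+B_n(i) > a_n+b_n\}}$, shows $M_n \ge \tfrac12 \abs{I_n}\, p_n$ with high probability via the binomial variance bound, then bounds $\E(M_n - N_n) \le \abs{I_n}\, p_n \,\pr(B_n(i^*) \le b_n)$ using only the \emph{pointwise} independence of $A_n(i)$ and $B_n(i)$, and concludes by Markov's inequality that $N_n \ge \tfrac14 \abs{I_n}\, p_n \ge 1$ with high probability. Your route is the mirror image: localize first on the $B$-side to the random set $J_n$, then exploit the i.i.d.\ structure of the $A$'s over $J_n$.

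The trade-off is exactly the one you flag at the end. Your conditioning step needs the full family $(A_n(i))_{i\in I_n}$ to be independent of the full family $(B_n(i))_{i\in I_n}$, whereas the lemma only assumes independence of $A_n(i)$ and $B_n(i)$ for each fixed $i$. The paper's argument goes through under this weaker pointwise hypothesis because it never conditions on the $B$-family globally; it only factors $\pr(A_n(i) > a_n,\, B_n(i) \le b_n)$ one index at a time inside an expectation. So while joint independence does hold in every application made in the paper, your proof as written establishes a version of the lemma with a stronger hypothesis, and the paper's decomposition buys precisely that extra generality.
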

\begin{proof}
Let
\[
 M_n
 \weq \abs{\{ i \in I_n: A_n(i) > a_n \}}
\]
and
\[
 N_n
 \weq \abs{\{ i \in I_n: A_n(i) > a_n, \, A_n(i) + B_n(i) > a_n + b_n \}}.
\]
Observe that $M_n$ is binomially distributed with $\abs{I_n}$ trials and rate parameter $p_n = \pr(A_n(i) > a_n)$.
Then $\E M_n = \abs{I_n} p_n$ and $\Var(M_n) \le \E M_n$, and because $\abs{I_n} p_n \to \infty$, it follows that $M_n \ge \frac12 \abs{I_n} p_n$ with high probability. Moreover,
\begin{align*}
 \E (M_n - N_n)
 &\weq \sum_{i \in I_n} \pr \left( A_n(i) > a_n, \, A_n(i) + B_n(i) \le a_n + b_n \right) \\
 &\wle \sum_{i \in I_n} \pr \left( A_n(i) > a_n, \, B_n(i) \le b_n \right) \\
 &\weq \sum_{i \in I_n} \pr ( A_n(i) > a_n ) \pr  (B_n(i) \le b_n ) \\
 &\weq \abs{I_n} p_{n} \, \pr( B_n(i^*) \le b_n ).
\end{align*}
Because $\pr( B_n(i^*) \le b_n ) = o(1)$, Markov's inequality implies that $M_n - N_n \le \frac14 \abs{I_n} p_n$ with high probability. We conclude that, with high probability,
\[
 N_n
 \weq M_n - (M_n-N_n)
 \wge \frac12 \abs{I_n} p_n - \frac14 \abs{I_n} p_n
 \weq \frac14 \abs{I_n} p_n
 \wge 1,
\] 
and $\max_{i \in I_n} (A_n(i) + B_n(i)) > a_n + b_n$.
\end{proof}

(i) A suitable lower bound for \eqref{eq:DelayedPassageTypical} follows immediately from \eqref{eq:PassageTypical} because $W(u^*, v^*) \ge W_G(u^*, v^*)$ almost surely.

(ii) To prove a lower bound for \eqref{eq:DelayedFloodingTypical}, note that the exponential tail assumption \eqref{eq:ExpTail} implies that
\[
 n \, \pr \left( X_1(v) > \big( \frac{1}{\lambda_1} - \epsilon \big) \log n \right)
 \ \to \ \infty.
\]
Then by applying Lemma~\ref{the:MaxSumLB} (with $A_n(v) = X_1(v)$, $B_n(v) = W_G(u^*,v)$, and $I_n = [n]$), recalling \eqref{eq:PassageTypical}, we find that, \whp,
\begin{equation}
\label{eq:FloodingTypicalLB}
\begin{aligned}
 \max_v W(u^*,v)
 &\weq \max_v \Big( X_0(u^*) + W_G(u^*, v) + X_1(v) \Big) \\
 &\wge \max_v \Big( W_G(u^*, v) + X_1(v) \Big) \\
 &\wge \left( \frac{1}{\lambda(\nu-1)} + \frac{1}{\lambda_1} - 2\epsilon \right) \log n.
\end{aligned}
\end{equation}
By noting that $\max_v W(u^*,v) \ge \max_v W_G(u^*, v)$ and applying \eqref{eq:FloodingTypical}, we also obtain
\[
 \max_v W(u^*,v)
 \wge \left( \frac{1}{\lambda(\nu-1)} + \frac{1}{\lambda \delta} - 2\epsilon \right) \log n,
\]
and hence \whp,
\[
 \max_v W(u^*,v)
 \wge \left( \frac{1}{\lambda(\nu-1)} + \frac{1}{\lambda \delta \wedge \lambda_1} - 2\epsilon \right) \log n.
\]

(iii) To prove a lower bound for \eqref{eq:DelayedFloodingMax}, note that for any $u \ne v$,
\begin{align*}
 &\pr \left( \frac{X_0(u) + X_1(v)}{\log n} > \frac{1}{\lambda_0} + \frac{1}{\lambda_1} - 2 \epsilon \right) \\
 &\wge \pr \left( \frac{X_0(u)}{\log n} > \frac{1}{\lambda_0} - \epsilon , \ 
 \frac{X_1(u)}{\log n} > \frac{1}{\lambda_1} - \epsilon \right) \\
 &\weq \pr \left( \frac{X_0(u)}{\log n} > \frac{1}{\lambda_0} - \epsilon \right)
 \pr \left( \frac{X_1(u)}{\log n} > \frac{1}{\lambda_1} - \epsilon \right).
\end{align*}
Then the exponential tail assumption \eqref{eq:ExpTail} implies that
\[
 n(n-1) \pr \left( \frac{X_0(u) + X_1(v)}{\log n} > \frac{1}{\lambda_0} + \frac{1}{\lambda_1} - 2\epsilon \right)
 \ \to \ \infty.
\]

Observe next that if $u^*$ and $v^*$ are independent uniformly random elements of $[n]$, and $i^*$ is a uniformly random event in $I_n = \{ (u,v) \in [n]^2: u \ne v\}$, then
\begin{align*}
 \pr( W_G(u^*, v^*) \in F)
 &\weq \frac{1}{n^2} \sum_u \sum_v \pr( W_G(u, v) \in F) \\
 &\weq \frac{1}{n} \pr( W_G(u^*, u^*) \in F) + \frac{n-1}{n} \pr( W_G(i^*) \in F)
\end{align*}
for all measurable sets $F \subset \R$. Hence \eqref{eq:PassageTypical} implies that, \whp,
\[
 W_G(i^*)
 \ > \ \left( \frac{1}{\lambda(\nu-1)} - \epsilon \right) \log n.
\]
Then we may apply Lemma~\ref{the:MaxSumLB} with $A_n(u,v) = X_0(u) + X_1(v)$ and $B_n(u,v) = W_G(u,v)$, to conclude that, \whp,
\[
 \max_{u,v} W(u,v)
 \wge \left( \frac{1}{\lambda_0} + \frac{1}{\lambda(\nu-1)} + \frac{1}{\lambda_1} - 3\epsilon \right) \log n.
\]

We will next apply Lemma~\ref{the:MaxSumLB} again, this time with $I_n = [n]$, and $A_n(v) = X_1(v)$ and $B_n(v) = \max_u (X_0(u) + W_G(u,v))$, recalling \eqref{eq:FloodingTypical},  to conclude that, \whp,
\[
 \max_{u,v} W(u,v)
 \wge \left( \frac{1}{\lambda \delta} + \frac{1}{\lambda(\nu-1)} + \frac{1}{\lambda_1} - 3\epsilon \right) \log n.
\]
By a symmetrical argument, we also find that, \whp,
\[
 \max_{u,v} W(u,v)
 \wge \left( \frac{1}{\lambda_0} + \frac{1}{\lambda(\nu-1)} + \frac{1}{\lambda \delta} - 3\epsilon \right) \log n.
\]
By combining the three above inequalities with \eqref{eq:FloodingMax}, we may conclude that, \whp,
\[
 \max_{u,v} W(u,v)
 \wge \left( \frac{1}{\lambda \delta \wedge \lambda_0}
 + \frac{1}{\lambda(\nu-1)}
 + \frac{1}{\lambda \delta \wedge \lambda_1} - 3\epsilon \right) \log n.
\]

\ifarxiv
\bibliographystyle{alpha}
\fi
\ifapt
\bibliographystyle{apt}
\fi
\bibliography{lslReferences}

\newcommand{\SortNoop}[1]{}\def\cprime{$'$}
\begin{thebibliography}{10}

\bibitem{Aalto_Leskela_2015}
{\sc Aalto, P. and Leskel\"a, L.} (2015).
\newblock Information spreading in a large population of active transmitters
  and passive receivers.
\newblock {\em SIAM Journal on Applied Mathematics\/} {\bf 75,} 1965--1982.

\bibitem{Amini_Draief_Lelarge_2013}
{\sc Amini, H., Draief, M. and Lelarge, M.} (2013).
\newblock Flooding in weighted sparse random graphs.
\newblock {\em SIAM J. Discrete Math.\/} {\bf 27,} 1--26.

\bibitem{Amini_Lelarge_2015}
{\sc Amini, H. and Lelarge, M.} (2015).
\newblock The diameter of weighted random graphs.
\newblock {\em Ann. Appl. Probab.\/} {\bf 25,} 1686--1727.

\bibitem{Andersson_Britton_2000}
{\sc Andesson, H. and Britton, T.} (2000).
\newblock {\em Stochastic Epidemic Models and Their Statistical Analysis}.
\newblock Springer.

\bibitem{Auffinger_Damron_Hanson_2015}
{\sc {Auffinger}, A., {Damron}, M. and {Hanson}, J.} (2015).
\newblock {50 years of first passage percolation}.
\newblock {\em ArXiv e-prints\/}.

\bibitem{Baroni_VanDerHofstad_Komjathy_2017}
{\sc Baroni, E., van~der Hofstad, R. and Komj{\'a}thy, J.} (2017).
\newblock Nonuniversality of weighted random graphs with infinite variance
  degree.
\newblock {\em Journal of Applied Probability\/} {\bf 54,} 146--164.

\bibitem{Bhamidi_VanDerHofstad_Hooghiemstra_2010}
{\sc Bhamidi, S., van~der Hofstad, R. and Hooghiemstra, G.} (2010).
\newblock First passage percolation on random graphs with finite mean degrees.
\newblock {\em Ann. Appl. Probab.\/} {\bf 20,} 1907--1965.

\bibitem{Bollobas_1980}
{\sc Bollob{\'a}s, B.} (1980).
\newblock A probabilistic proof of an asymptotic formula for the number of
  labelled regular graphs.
\newblock {\em European Journal of Combinatorics\/} {\bf 1,} 311--316.

\bibitem{Ding_Kim_Lubetzky_Peres_2010}
{\sc Ding, J., Kim, J.~H., Lubetzky, E. and Peres, Y.} (2010).
\newblock Diameters in supercritical random graphs via first passage
  percolation.
\newblock {\em Combin. Probab. Comput.\/} {\bf 19,} 729--751.

\bibitem{Hammersley_Welsh_1965}
{\sc Hammersley, J.~M. and Welsh, D. J.~A.} (1965).
\newblock First-passage percolation, subadditive processes, stochastic
  networks, and generalized renewal theory.
\newblock In {\em Proc. {I}nternat. {R}es. {S}emin., {S}tatist. {L}ab., {U}niv.
  {C}alifornia, {B}erkeley, {C}alif}.
\newblock Springer-Verlag, New York pp.~61--110.

\bibitem{Janson_1999}
{\sc Janson, S.} (1999).
\newblock One, two and three times {$\log n/n$} for paths in a complete graph
  with random weights.
\newblock {\em Combin. Probab. Comput.\/} {\bf 8,} 347--361.

\bibitem{Janson_Luczak_2009}
{\sc Janson, S. and Luczak, M.~J.} (2009).
\newblock A new approach to the giant component problem.
\newblock {\em Random Struct. Algor.\/} {\bf 34,} 197--216.

\bibitem{Leskela_Ngo_2017}
{\sc Leskel\"a, L. and Ngo, H.} (2017).
\newblock The impact of degree variability on connectivity properties of large
  networks.
\newblock {\em Internet Mathematics\/}.

\bibitem{Leskela_Vihola_2013}
{\sc Leskel{\"a}, L. and Vihola, M.} (2013).
\newblock Stochastic order characterization of uniform integrability and
  tightness.
\newblock {\em Statist. Probab. Lett.\/} {\bf 83,} 382--389.

\bibitem{Marshall_Olkin_Arnold_2011}
{\sc Marshall, A.~W., Olkin, I. and Arnold, B.~C.} (2011).
\newblock {\em Inequalities: {T}heory of Majorization and Its Applications}.
\newblock Springer.

\bibitem{Newman_2010}
{\sc Newman, M. E.~J.} (2010).
\newblock {\em Networks --- An Introduction}.
\newblock Oxford University Press.

\bibitem{Pittel_1987}
{\sc Pittel, B.} (1987).
\newblock On spreading a rumor.
\newblock {\em SIAM J. Appl. Math.\/} {\bf 47,} 213--223.

\bibitem{VanDerHofstad_2018}
{\sc van~der Hofstad, R.}
\newblock Random graphs and complex networks - {V}ol. {II} 2018.

\bibitem{VanMieghem_2014}
{\sc Van~Mieghem, P.} (2014).
\newblock {\em Performance Analysis of Complex Networks and Systems}.
\newblock Cambridge University Press.

\end{thebibliography}
\end{document}